\newcommand{\ms}[1]{\mbox{\tiny$#1$}}
\newcommand{\epsh}[2]
          {\begin{array}{c} \hspace{-1.3mm}
         \raisebox{-4pt}{\epsfig{figure=fig/#1,height=#2}}
         \hspace{-1.9mm}\end{array}}
\newcommand{\coev}{\operatorname{coev}}
\newcommand{\tev}{\stackrel{\longleftarrow}{\operatorname{ev}}}
\newcommand{\rcoev}{\stackrel{\longrightarrow}{\operatorname{coev}}}
\newcommand{\rev}{\stackrel{\longrightarrow}{\operatorname{ev}}}
\newcommand{\lev}{\stackrel{\longleftarrow}{\operatorname{ev}}}
\newcommand{\lcoev}{\stackrel{\longleftarrow}{\operatorname{coev}}}
\newcommand{\Id}{\operatorname{Id}}
\newcommand{\qdim}{\operatorname{qdim}}
\newcommand{\End}{\operatorname{End}}
\newcommand{\Hom}{\operatorname{Hom}}
\newcommand{\ptr}{\operatorname{ptr}}
\newcommand{\wb}{\overline}
\newcommand{\cat}{\operatorname{\mathscr{C}}}
\newcommand{\tv}{\operatorname{t}}
\newcounter{exo} \newcounter{numexercice}
\renewcommand{\theexo}{\arabic{exo}}
\newtheorem{theo}{Theorem}[section]
\newtheorem{Def}[theo]{Definition}
\newtheorem{The}[theo]{Theorem}
\newtheorem{Lem}[theo]{Lemma}
\newtheorem{Pro}[theo]{Proposition}
\newtheorem{HQ}[theo]{Corollary}
\newtheorem{rmq}[theo]{Remark}
\begin{document}
\title[Anomaly-free TQFTs from $\mathfrak{sl}(2|1)$]{Anomaly-free TQFTs from the super Lie algebra $\mathfrak{sl}(2|1)$}


\author{Ngoc Phu HA}        
\address{Hung Vuong University, Faculty of Natural Science, Nong Trang, Viet Tri, Phu Tho, Viet Nam}
\email{ngocphu.ha@hvu.edu.vn}


\maketitle

\begin{abstract}
It is known that the category $\mathscr{C}^H$ of nilpotent weight modules over the quantum group associated with the super Lie algebra $\mathfrak{sl}(2|1)$ is a relative pre-modular $G$-category. Its modified trace enables to define an invariant of $3$-manifolds. In this article we show that the category $\mathscr{C}^H$ is a relative modular $G$-category which allows one to construct a family of non-semi-simple extended topological quantum field theories which surprisingly are anomaly free. The quantum group associated with $\mathfrak{sl}(2|1)$ is considered at odd roots of unity.
\end{abstract}
\vspace{25pt}

MSC:	57M27, 17B37

Key words: relative (pre)modular $G$-category, invariant of $3$-manifolds, modified trace, quantum group.

\subsection*{Acknowledgments}
The result of the article is a part of my PhD thesis. The author would like to thank my supervisor, B. Patureau-Mirand for his useful comments and for his encouragement.

\section{Introduction}
By renormalization the invariant of Reshetikhin-Turaev, the authors N. Geer and B. Patureau-Mirand presented the notion of a modified trace in \cite{NgBpVt09}. This notion gave them a family of link invariants (see \cite{NgBpVt09, NgBPm12}) and  of invariant of $3$-manifolds (see \cite{FcNgBp14}). The invariants of $3$-manifolds in  \cite{FcNgBp14} have been used to define a family of non-semi-simple topological quantum field theories (TQFTs) in \cite{CFNP16}.

A relative pre-modular $G$-category $\mathscr{C}$ is a $G$-graded $\Bbbk$-linear ribbon category having a modified trace and two abelian groups $(G,Z)$ in which $Z$ acts on a class of objects of $\mathscr{C}$ satisfying some compatible conditions, see Definition \ref{G-modrel} (see also \cite{FcNgBp14}).
A category of finite weight modules over a quantization of $\mathfrak{sl}(2)$ have been proven to be relative pre-modular in \cite{FcNgBp14}, and this category is used to illustrate the main results of the article \cite{FcNgBp14}.
 Such categories are the ingredients to construct topological invariants $N_r$ or CGP invariants of the computable triple $(M, T, \omega)$ where $T$ is a $\mathscr{C}$-colored ribbon graph in $M$ and $\omega\in H^1(M\setminus T, G)$ (see \cite[Theorem 4.7]{FcNgBp14}). In the article \cite{Mar17}, the author enriched the structure of a relative pre-modular category $\cat$ by adding a relative modularity condition, the category $\cat$ with this condition is called a relative modular category, see Definition \ref{Def of modular category}. Relative modular categories are main ingredients to construct a family of non-semisimple extended topological quantum field theories (ETQFTs) \cite{Mar17}. A model of a relative modular category from the unrolled quantum group associated to the Lie algebra $\mathfrak{sl}(2)$ was first constructed in \cite{CFNP16}. 

Working with the unrolled quantum group associated with the super Lie algebra $\mathfrak{sl}(2|1)$, denoted by $\mathcal{U}_{\xi}^{H}\mathfrak{sl}(2|1)$ where $\xi$ is a root of unity, one see that the category $\cat^H$ of nilpotent weight modules over $\mathcal{U}_{\xi}^{H}\mathfrak{sl}(2|1)$ is a relative pre-modular $G$-category \cite{Ha16}.
This fact allowed one to construct an invariant of CGP type.
In this paper, we show for the super Lie algebra $\mathfrak{sl}(2|1)$  that, as in the case of Lie algebras, $\cat^H$ is a modular $G$-category relative to $(G,Z)$ where $\mathit{G}=\mathbb{C} / \mathbb{Z} \times \mathbb{C}/ \mathbb{Z}$ and $Z=\mathbb{Z}\times \mathbb{Z}\times \mathbb{Z}/2\mathbb{Z}$. 
This fact provides a new family of examples of relative modular categories. One specific feature with the $\mathfrak{sl}(2|1)$ case is that the anomaly of the TQFT happen to be trivial.

The paper is organized in four sections. In Section 2, we recall some definitions about the relative pre-modular $G$-categories and the quantum group $\mathcal{U}_{\xi}^{H}\mathfrak{sl}(2|1)$. Then in Section 3, we describe the category $\cat^H$ of nilpotent finite dimensional modules over $\mathcal{U}_{\xi}^{H}\mathfrak{sl}(2|1)$ and its properties. Finally, Section 4 proves the category $\cat^H$ is a relative modular $G$-category.

\section{Preliminaries}
In this section we recall some definitions and results about a relative pre-modular $G$-category from \cite{FcNgBp14}, and a quantization of the super Lie algebra $\mathfrak{sl}(2|1)$. 
\subsection{Relative pre-modular $G$-categories}
The categories we mention in the paper are ribbon. 
A {\em tensor category} $\cat$ is a category equipped with a covariant bifunctor $- \otimes -: \cat \times\cat \rightarrow \cat$ called the tensor product, a unit object $\mathbb{I}$, an associativity constraint, and left and right unit constraints such that the Triangle and Pentagon Axioms hold (see \cite[XI.2]{ChKa95}).

A {\em braiding} on a tensor category $\cat$ consists of a family of isomorphisms $\{c_{V,W}: V\otimes W \rightarrow W\otimes V\}$, defined for each pair of objects $V,W$ which satisfy the Hexagon Axiom \cite[XIII.1 (1.3-1.4)]{ChKa95} as well as the naturality condition expressed in the commutative diagram \cite[XIII.1.2]{ChKa95}. We say a tensor category is {\em braided} if it has a braiding. 

A {\em pivotal category} is a tensor category which has duality if for each object $V \in \cat$ there exits an object $V^*$ and
morphisms
\begin{align*}
&\rev_{V}: V^{*} \otimes V \rightarrow \mathbb{I}, &\rcoev_{V}: \mathbb{I} \rightarrow V \otimes V^{*},\\
&\lev_{V}:V \otimes V^{*} \rightarrow \mathbb{I}, &\lcoev_{V}:\mathbb{I} \rightarrow V^{*} \otimes V
\end{align*} 
satisfying the relations
\begin{equation*}
(\Id_V\otimes \rev_V)\circ (\rcoev_V\otimes \Id_V)=\Id_V,\ (\rev_V\otimes \Id_{V^*})\circ (\Id_{V^*} \otimes \rcoev_V)=\Id_{V^*},
\end{equation*}
and
\begin{equation*}
(\lev_V\otimes \Id_{V})\circ (\Id_{V}\otimes \lcoev_V)=\Id_V,\ (\Id_{V^*}\otimes \lev_V)\circ (\lcoev_V\otimes \Id_{V^*})=\Id_{V^*}.
\end{equation*}
A {\em twist} in a braided tensor category $\cat$ with duality is a family $\{\theta_V: V\rightarrow V \}$ of natural
isomorphisms defined for each object $V$ of $\cat$ satisfying relations \cite[XIV.3.1-3.2]{ChKa95}.

A {\em ribbon category} is a braided tensor category with duality and a twist. We say that $\mathscr{C}$ is a $\Bbbk$-linear category if for all $V,W \in \mathscr{C}$, the morphisms $\Hom_{\mathscr{C}}(V,W)$ form a $\Bbbk$-vector space and the composition and the tensor product are bilinear and, $\End_{\mathscr{C}}(\mathbb{I})\cong \Bbbk$ where $\Bbbk$ is a field.

 Let $\mathscr{C}$ be a $\Bbbk$-linear ribbon category. A set of objects of $\mathscr{C}$ is said to be commutative if for any pair $\{V, W \}$ of these objects, we have $c_{V,W}\circ c_{W,V}=\Id_{W \otimes V}$ and $\theta_{V}=\Id_{V}$. Let $(Z, +)$ be a commutative group. A {\em realization} of $Z$ in $\mathscr{C}$ is a commutative set of objects $\{{\varepsilon^{t}}\}_{t \in Z}$ such that $\varepsilon^{0}=\mathbb{I}, \qdim(\varepsilon^{t})=1$ and $\varepsilon^{t} \otimes \varepsilon^{t'}=\varepsilon^{t+t'}$ for all $t, t' \in Z$.

 A realization of $Z$ in $\mathscr{C}$ induces an action of $Z$ on isomorphism classes of objects of $\mathscr{C}$ by $(t, V) \mapsto \varepsilon^{t} \otimes V$. We say that $\{{\varepsilon^{t}}\}_{t \in Z}$ is a {\em free realization} of $Z$ in $\mathscr{C}$ if this action is free. This means that $\forall t \in Z \backslash \{0\}$ and for any simple object $V \in \mathscr{C}, V \otimes \varepsilon^{t} \not\simeq V$. We call {\em simple $Z$-orbit} the reunion of isomorphism classes of an orbit for this action.
  
We call {\em a modified trace} $\tv$ on ideal $\texttt{Proj}$ of projective objects of $\mathscr{C}$ a family of linear maps $\{\tv_V: \End_{\cat}(V)\rightarrow \Bbbk\}_{V\in \texttt{Proj}}$ satisfying the conditions
$\forall U,V \in \texttt{Proj}, \forall W \in \mathscr{C}$,
$$\forall f \in \Hom_{\mathscr{C}}(U,V), \forall g \in \Hom_{\mathscr{C}}(V, U), \tv_{V}(f \circ g)= \tv_{U}(g \circ f)$$
$$\forall f \in \End_{\mathscr{C}}(V \otimes W), \ \tv_{V\otimes W}(f)=\tv_{V}(\ptr_{R}(f))$$
where $\ptr_{R}(f)=(\Id_{V}\otimes \lev_{W})\circ (f \otimes \Id_{W^{*}})\circ (\Id_{V} \otimes \rcoev_{W}) \in \End_{\cat}(V)$. We call $d(V)=\tv_V(\Id_V)$ the {\em modified dimension} of $V\in \cat$. 
The formal linear combination $\Omega_g=\sum_i d(V_i)V_i$ is called {\em a Kirby color of degree $g \in G$} if the isomorphism classes of the $\{V_i\}_i$ are in one to one correspondence with the simple $Z$-orbits of $\cat_g$.

Recall that $F$ is the Reshetikhin-Turaev functor from the ribbon category $\mathcal{R}ib_{\cat}$ of ribbon $\cat$-colored graphs to $\cat$ (see \cite{Tura94}). The renormalization of $F$ is denoted by $F'$ (see \cite{NgBpVt09}).
 \begin{Def}[\cite{FcNgBp14}] \label{G-modrel}
 Let $(\mathit{G},\times)$ and $(Z, +)$ be two commutative groups. A $\Bbbk$-linear ribbon category $\mathscr{C}$ is a pre-modular $\mathit{G}$-category relative to $\mathcal{X}$ with modified dimension {\em d} and periodicity group $Z$ if 
 \begin{enumerate}
 \item  the category $\mathscr{C}$ has a $\mathit{G}$-grading $\{\mathscr{C}_g\}_{g \in \mathit{G}}$,
 \item  the group $Z$ has a free realization $\{{\varepsilon^{t}}\}_{t \in Z}$ in $\mathscr{C}_{1}$ (where $1 \in \mathit{G}$ is the unit),
 \item  there is a $\mathbb{Z}$-bilinear application $\mathit{G}\times Z \rightarrow \Bbbk^{\times}, (g,t)\mapsto g^{\bullet t}$ such that $\forall V \in \mathscr{C}_{g}, \forall t \in Z, c_{V,\varepsilon^{t}}\circ c_{\varepsilon^{t}, V}=g^{\bullet t}\Id_{\varepsilon^{t} \otimes V}$,
 \item there exists $\mathcal{X} \subset \mathit{G}$ such that $\mathcal{X}^{-1}=\mathcal{X}$ and $\mathit{G}$ cannot be covered by a finite number of
translated copies of $\mathcal{X}$, in other words $\forall g_{1}, ..., g_{n} \in \mathit{G}, \cup_{i=1}^{n}(g_{i}\mathcal{X}) \neq \mathit{G}$,
 \item for all $g \in \mathit{G}\setminus \mathcal{X}$, the category $\mathscr{C}_{g}$ is semi-simple and its simple objects are in the reunion of a finite number of simple $Z$-orbits,
 \item  there exists a nonzero trace $t$ on ideal $\texttt{Proj}$ of projective objects of $\mathscr{C}$ and {\em d} is the associated modified dimension,
 \item  there exists an element $g \in \mathit{G}\setminus \mathcal{X}$ and an object $V \in \mathscr{C}_{g}$ such that the scalar $\Delta_{+}$ defined in Figure \ref{Kirby colour} is nonzero; similarly, there exists an element $g \in \mathit{G}\setminus \mathcal{X}$ and an object $V \in \mathscr{C}_{g}$ such that the scalar $\Delta_{-}$ defined in Figure \ref{Kirby colour} is nonzero,
 \begin{figure}
   \centering
   $$
   \begin{array}{ccc}
     F\left( \epsh{Figure51}{10ex}
 		\put(-35,2){\ms{\Omega_{\overline{\mu}}}}
 		\put(-20,-22){\ms{V}}\right) = \Delta_{-}\Id_{V}, \ 
 	F\left( \epsh{Figure52}{10ex}
 		\put(-35,2){\ms{\Omega_{\overline{\mu}}}}
 		\put(-20,-22){\ms{V}}\right) = \Delta_{+}\Id_{V}
   \end{array}
   $$
	\caption{{$V \in \mathscr{C}_{g}$ and $\Omega_{\overline{\mu}}$ is a Kirby color of degree $\mu$.}}
	\label{Kirby colour}
 \end{figure} 
  \item the morphism $S(U, V)=F(H(U,V)) \neq 0 \in \End_{\mathscr{C}}(V)$, for all simple objects 	$U,V \in \texttt{Proj}$, where $$H(U,V)= \epsh{Figure6}{7ex}
      \put(-6,-7){\ms{U}}
			\put(-10,18){\ms{V}} \in \End_{\mathscr{C}}(V).$$
 \end{enumerate}
  \end{Def}
  An example of a relative pre-modular $G$-category $\cat$ is the category of finite dimensional weight modules over $\wb U_q^{H}\mathfrak{sl}(2)$, see \cite[Subsection 6.3]{FcNgBp14}. Note that in \cite{FcNgBp14} the word {\em relative modular} is used instead of {\em relative pre-modular}. A relative pre-modular category is anomaly free if $\Delta_+ = \Delta_-$. In this case it is always possible to renormalize so that $\Delta_+ = \Delta_- = 1$.  
 Next we consider a quantization of the super Lie algebra $\mathfrak{sl}(2|1)$ from \cite{Ha16}.
\subsection{Quantum group $\mathcal{U}_{\xi}^H\mathfrak{sl}(2|1)$}
\begin{Def}
Let $\ell\geq 3$ be an odd integer and $\xi=\exp(\frac{2\pi i}{\ell})$.
The superalgebra $\mathcal{U}_\xi\mathfrak{sl}(2|1)$ is an associative superalgebra on $\mathbb{C}$ generated by the elements $k_1,k_2,k_1^{-1},k_2^{-1}, e_1,e_2,f_1,f_2$ 
and the relations
\begin{align*}
&k_1k_2=k_2k_1,\\
&k_{i}k_{i}^{-1}=1, \ i=1,2,\\
&k_ie_jk_i^{-1}=\xi^{a_{ij}}e_j, k_if_jk_i^{-1}=\xi^{-a_{ij}}f_j \ i,j=1,2,\\
&e_1f_1-f_1e_1=\frac{k_1-k_1^{-1}}{\xi-\xi^{-1}}, e_2f_2+f_2e_2=\frac{k_2-k_2^{-1}}{\xi-\xi^{-1}},\\
&[e_1, f_2]=0, [e_2, f_1]=0,\\
&e_2^{2}=f_2^{2}=0,\\
&e_1^{2}e_2-(\xi+\xi^{-1})e_1e_2e_1+e_2e_1^{2}=0,\\
&f_1^{2}f_2-(\xi+\xi^{-1})f_1f_2f_1+f_2f_1^{2}=0.
\end{align*}
The last two relations are called the Serre relations. The matrix $(a_{ij})$ is given by $a_{11}=2, a_{12}=a_{21}=-1, a_{22}=0$. The odd generators are $e_2, f_2$. 
\end{Def}
 We define $\xi^{x}:=\exp(\frac{2\pi i x}{\ell})$, and denote
 $$\{x\}= \xi^{x}-\xi^{-x}.$$

The algebra $\mathcal{U}_\xi\mathfrak{sl}(2|1)$ has a structure of a Hopf algebra with the coproduct, counit and the antipode given by (see \cite{SMkVNt91})
\begin{align*}
&\Delta(e_i)=e_i \otimes 1 + k_i^{-1} \otimes e_i \ i=1,2,\\ 
&\Delta(f_i)=f_i \otimes k_i + 1 \otimes f_i \ i=1,2,\\
&\Delta(k_i)=k_i \otimes k_i \ i=1,2,\\
&S(e_i)=- k_ie_i, S(f_i)=-f_ik_i^{-1}, S(k_i)=k_i^{-1} \ i=1,2,\\
&\epsilon(k_i)=1, \epsilon(e_i)=\epsilon(f_i)=0 \ i=1,2.
\end{align*}

We extend $\mathcal{U}_\xi\mathfrak{sl}(2|1)$ to a superalgebra over $\mathbb{C}$, denote by $\mathcal{U}_{\xi}^{H}\mathfrak{sl}(2|1)$, by adding two generators $h_1, h_2$ and the associated relations. This means that $\mathcal{U}_{\xi}^{H}\mathfrak{sl}(2|1)$ is a $\mathbb{C}$-superalgebra generated by $k_i, k_i^{-1}, e_i, f_i$ and $h_i$ for $i=1,2$, and the relations in $\mathcal{U}_\xi\mathfrak{sl}(2|1)$ plus the relations
 $$[h_{i},e_{j}]=a_{ij}e_{j}, [h_{i},f_{j}]=-a_{ij}f_{j}[h_{i},h_{j}]=0, \text{ and } [h_{i},k_{j}]=0$$ for $i, j = 1, 2$.
	
	The superalgebra $\mathcal{U}_{\xi}^{H}\mathfrak{sl}(2|1)$ is a Hopf superalgebra where the coproduct $\Delta$, the antipode $S$ and the counit $\epsilon$ are determined as in $\mathcal{U}_\xi\mathfrak{sl}(2|1)$ and by 
	$$\Delta(h_i)=h_i \otimes 1 + 1 \otimes h_i, S(h_i)=-h_i, \epsilon(h_i)=0 \ i=1,2.$$

Set $e_3=e_1e_2-\xi^{-1}e_2e_1, f_3=f_2f_1-\xi f_1f_2$.
Denote $$\mathfrak{B}_{+}=\{e_{1}^{p^{'}}e_{3}^{\sigma^{'}}e_{2}^{\rho^{'}}, p^{'} \in \{0,1,...,\ell-1\}, \rho^{'},\sigma^{'} \in \{ 0,1 \}\},$$ $$\mathfrak{B}_{-}=\{f_{2}^{\rho}f_{3}^{\sigma}f_{1}^{p}, p \in \{0,1,...,\ell-1\}, \rho, \sigma \in \{ 0,1 \}\},$$  $$\mathfrak{B}_{0}=\{k_{1}^{s_{1}}k_{2}^{s_{2}}, s_{1}, s_{2} \in \mathbb{Z}  \}\ \text{and} \  \mathfrak{B}_{h}=\{h_{1}^{t_{1}}h_{2}^{t_{2}}, t_{1}, t_{2} \in \mathbb{N}  \}.$$
We consider the quotients $\mathcal{U}=\mathcal{U}_\xi\mathfrak{sl}(2|1)/(e_{1}^{\ell}, f_{1}^{\ell})$, this superalgebra has a Poincar\'e-Birkhoff-Witt basis $\mathfrak{B}_{+}\mathfrak{B}_{0}\mathfrak{B}_{-}$ and  $\mathcal{U}^{H}=\mathcal{U}_{\xi}^{H}\mathfrak{sl}(2|1)/(e_{1}^{\ell}, f_{1}^{\ell})$ which has a Poincar\'e-Birkhoff-Witt basis $\mathfrak{B}_{+}\mathfrak{B}_{0}\mathfrak{B}_{h}\mathfrak{B}_{-}$.
\section{Relative pre-modular $G$-category $\mathscr{C}^H$}
In this section we represent first the category $\cat$ of nilpotent finite dimensional representations over $\mathcal{U}$, then the category $\cat^H$ of nilpotent finite dimensional representations over $\mathcal{U}^H$.
\subsection{Category $\cat$ of weight modules over $\mathcal{U}$ }
We consider the even category $\mathscr{C}$ of the nilpotent finite dimensional modules over $\mathcal{U}_{\xi}\mathfrak{sl}(2|1)$, its objects are finite dimensional representations of $\mathcal{U}_{\xi}\mathfrak{sl}(2|1)$ on which $e_{1}^{\ell}=f_{1}^{\ell}=0$ and $k_1, k_2$ are diagonalizable operators. If $V, V^{'} \in \mathscr{C}$, $\Hom_{\mathscr{C}}(V, V^{'})$ is formed by the even morphisms between these two modules (see \cite{NgBp08}). Each nilpotent simple module is determined by the highest weight $\mu = (\mu_{1}, \mu_{2}) \in \mathbb{C}^{2}$ and is denoted $ V_{\mu_{1}, \mu_{2}}$ or $V_{\mu}$. Its highest weight vector $w_{0, 0, 0}$ satisfies
\begin{align*} 
e_1w_{0,0,0}&=0, &e_2w_{0,0,0}&=0, \\
k_1w_{0,0,0}&=\lambda_1 w_{0,0,0}, &k_2w_{0,0,0}&=\lambda_2 w_{0,0,0}
\end{align*}
where $\lambda_i=\xi^{\mu_i}$ for $i=1,2.$
	
	For $\mu=(\mu_{1}, \mu_{2}) \in \mathbb{C}^{2}$ we say that $\mathcal{U}_\xi\mathfrak{sl}(2|1)$-module $V_{\mu}$ is typical if it is a simple module of dimension $4\ell$. Other simple modules are said to be atypical.
	
	The basis of a typical module is formed by vectors $w_{\rho, \sigma, p}=f_2^{\rho}f_3^{\sigma}f_1^{p}w_{0,0,0}$ where $\rho, \sigma \in \{0, 1\}, 0\leq p < \ell$. The odd elements are $w_{0,1, p}$ and $w_{1,0, p}$, others are even. The representation of typical $\mathcal{U}_\xi\mathfrak{sl}(2|1)$-module $V_{\mu_1, \mu_2}$ is determined by
\begin{align*}
&k_{1}w_{\rho, \sigma, p}=\lambda_{1}\xi^{\rho-\sigma-2p}w_{\rho, \sigma, p}, \\
&k_{2}w_{\rho, \sigma, p}=\lambda_{2}\xi^{\sigma+p}w_{\rho, \sigma, p},\\
&f_{1}w_{\rho, \sigma, p}=\xi^{\sigma-p}w_{\rho, \sigma, p+1}-\rho(1-\sigma)\xi^{-\sigma}w_{\rho-1, \sigma +1, p},\\
&f_{2}w_{\rho, \sigma, p}=(1-\rho)w_{\rho+1, \sigma, p}, \\
&e_{1}w_{\rho, \sigma, p}=-\sigma(1-\rho)\lambda_{1}\xi^{-2p+1}w_{\rho+1, \sigma-1, p}+[p][\mu_{1}-p+1]w_{\rho, \sigma, p-1},\\
&e_{2}w_{\rho, \sigma, p}=\rho[\mu_{2}+p+\sigma]w_{\rho -1, \sigma, p}+\sigma(-1)^{\rho}\lambda_{2}^{-1}\xi^{-p}w_{\rho, \sigma -1, p+1}.
\end{align*}
where $\rho, \sigma \in \{0,1 \}$ and $p \in \{0,1,...,\ell-1\}$. \vspace{13pt}

	We also have $V_{\mu}\simeq V_{\mu+\vartheta}\Leftrightarrow \vartheta \in (\ell \mathbb{Z})^{2}$.

\begin{rmq}	 
	The module $V_{\mu}$ is typical if $[\mu_{1}-p+1] \ne 0 \ \forall p \in \{1,...,\ell-1\} \ (\mu_{1} \neq p-1+\frac{\ell}{2}\mathbb{Z}\ \forall p \in \{1,...,\ell-1\})$ and $[\mu_{2}][\mu_{1}+\mu_{2}+1] \ne 0 \ (\mu_2 \neq \frac{\ell}{2}\mathbb{Z}, \mu_{1}+\mu_{2} \neq -1 + \frac{\ell}{2}\mathbb{Z})$ {\em (see \cite{BaDaMb97})}.
\end{rmq}
	
\subsection{Relative pre-modular $G$-category $\mathscr{C}^H$}
Let $V_{\mu_{1},\mu_{2}}$ be an object of $\cat$. We define the actions of $h_i, i=1, 2$ on the basis of $V_{\mu_{1},\mu_{2}}$ by $$h_1w_{\rho, \sigma, p}=(\mu_1+\rho - \sigma - 2p)w_{\rho, \sigma, p}, h_2w_{\rho, \sigma, p}=(\mu_2+\sigma + p)w_{\rho, \sigma, p}.$$
With these actions $V_{\mu_{1},\mu_{2}}$ is a module over $\mathcal{U}^H$.

	We consider the even category $\mathscr{C}^{H}$ of nilpotent finite dimensional $\mathcal{U}^{H}$-modules, that means $e_{1}^{\ell}=f_{1}^{\ell}=0$, and for which $\xi^{h_i}=k_i$ as diagonalizable operators with $i=1,2$. The categories $\cat$ and $\mathscr{C}^{H}$ are pivotal in which the pivotal structure is given by $g = k_1^{-\ell}k_2^{-2}$ (see \cite{Ha16}).

	Thus $V_{\mu_{1},\mu_{2}}$ is a weight module of $\mathscr{C}^{H}.$ A module in $\mathscr{C}^{H}$ is said to be typical if, seen as a $\mathcal{U}_\xi\mathfrak{sl}(2|1)$-module, it is typical. For each module $V$ we denote $\overline{V}$ the same module with the opposite parity.
	We set $\mathit{G}=\mathbb{C} / \mathbb{Z} \times \mathbb{C}/ \mathbb{Z}$ and for each $\overline{\mu} \in \mathit{G}$ we define $\mathscr{C}_{\overline{\mu}}^{H}$ as the subcategory of weight modules which have their weights in the coset $\overline{\mu} \ (\text{modulo} \ \mathbb{Z}\times \mathbb{Z})$. So $\{ \mathscr{C}_{\overline{\mu}}^{H} \}_{\overline{\mu} \in \mathit{G}}$ is a $\mathit{G}$-graduation (where $\mathit{G}$ is an additive group): let $V \in \mathscr{C}_{\overline{\mu}}^{H}, V^{'}\in \mathscr{C}_{\overline{\mu}^{'}}^{H}$, then the weights of $V\otimes V^{'}$ are congruent to $\overline{\mu} + \overline{\mu}^{'}\ (\text{modulo} \ \mathbb{Z}\times \mathbb{Z})$. Furthermore, if $\overline{\mu} \ne \overline{\mu}^{'}$ then $\Hom_{\mathscr{C}^{H}}(V,V^{'})=0$ because a morphism preserves weights. 

It is shown that the category $\cat^H$ is braided, pivotal and have a twist (see \cite{Ha16}). Thus we have the proposition.
\begin{Pro}
$\mathscr{C}^{H}$ is a ribbon category.
\end{Pro}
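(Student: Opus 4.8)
The plan is to verify directly that the data assembled in the preceding discussion satisfy the definition of a ribbon category recalled in Section~2: a braided tensor category with duality and a twist. The tensor structure is induced by the coproduct of the Hopf superalgebra $\mathcal{U}^{H}_{\xi}\mathfrak{sl}(2|1)$, the graduation $\{\mathscr{C}^{H}_{\overline{\mu}}\}$ has already been shown to be monoidal (the weights of $V\otimes V'$ lie in $\overline{\mu}+\overline{\mu}'$), and the unit is the trivial module $\mathbb{I}$ with $\End_{\mathscr{C}^{H}}(\mathbb{I})\cong\mathbb{C}$. Hence the only thing to collect is (i)~the braiding, (ii)~the duality, and (iii)~the twist, together with their mutual compatibility.

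For (i) and (ii): the braiding $\{c_{V,W}\}$ is the one coming from the action of the (truncated) universal $R$-matrix of $\mathcal{U}^{H}_{\xi}\mathfrak{sl}(2|1)$ on weight modules, which satisfies the Hexagon Axiom and the naturality condition by \cite{Ha16}. The duality is furnished by the pivotal structure: for $V_{\mu_{1},\mu_{2}}\in\mathscr{C}^{H}$ the dual $V^{*}$ is the contragredient module, and the left and right evaluations and coevaluations $\rev_{V},\rcoev_{V},\lev_{V},\lcoev_{V}$ are built from the pivotal element $g=k_{1}^{-\ell}k_{2}^{-2}$; these satisfy the four snake identities recalled in Section~2, so $\mathscr{C}^{H}$ has duality, and combined with the braiding it is already a braided pivotal category.

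For (iii): the twist $\theta_{V}$ is obtained from the Drinfeld element $u$ of the $R$-matrix together with the pivotal element, acting as a natural family of isomorphisms $\theta_{V}\colon V\to V$. To conclude that $\mathscr{C}^{H}$ is ribbon it then remains only to check the twist axioms \cite[XIV.3.1-3.2]{ChKa95}, i.e. the balancing relation $\theta_{V\otimes W}=(\theta_{V}\otimes\theta_{W})\circ c_{W,V}\circ c_{V,W}$ and the compatibility with duality $\theta_{V^{*}}=(\theta_{V})^{*}$. Since these are precisely the properties recorded in \cite{Ha16}, the three ingredients fit the definition and $\mathscr{C}^{H}$ is a ribbon category. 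I expect the only genuine subtlety to be this last compatibility step: because the $R$-matrix and the Drinfeld element are realized as operators on weight modules rather than as honest elements of the root-of-unity algebra, and because the super (that is, $\mathbb{Z}/2\mathbb{Z}$-graded) structure forces extra signs, one must verify that the balancing relation and $\theta_{V^{*}}=(\theta_{V})^{*}$ hold with the correct signs, which amounts to checking that $\mathcal{U}^{H}_{\xi}\mathfrak{sl}(2|1)$ behaves as a ribbon Hopf superalgebra on the category of nilpotent weight modules.
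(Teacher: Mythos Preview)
Your proposal is correct and follows the same route as the paper: the paper does not give a standalone proof but simply records that the braiding, the pivotal (duality) structure, and the twist on $\mathscr{C}^{H}$ have already been established in \cite{Ha16}, so that the proposition is an immediate consequence. Your write-up is a more explicit unpacking of exactly this, citing the same source for the $R$-matrix braiding, the pivotal element $g=k_{1}^{-\ell}k_{2}^{-2}$, and the twist axioms; the added discussion of the super signs and of the $R$-matrix being realised only on weight modules is a fair expansion but not a different argument.
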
	
The pivotal structure is given by $g=k_{1}^{-\ell}k_{2}^{-2}$ and the braiding is determined by  
\begin{equation*}
\mathcal{R}=\check{\mathcal{R}}\mathcal{K}
\end{equation*} where 
\begin{equation}\label{R-matrix}
\check{\mathcal{R}}=\sum_{i=0}^{\ell-1}\frac{\{1\}^{i}e_1^{i} \otimes f_1^{i}}{(i)_{\xi}!} (1-e_3\otimes f_3)(1-e_2 \otimes f_2),
\end{equation}
in which $(0)_{\xi}!=1, (i)_{\xi}!=(1)_{\xi}(2)_{\xi}\cdots (i)_{\xi}, (k)_{\xi}=\frac{1-\xi^k}{1-\xi}$ and 
\begin{equation}\label{R-matrix 1}
\mathcal{K}=\xi^{-h_1 \otimes h_2 -h_2 \otimes h_1 - 2h_2 \otimes h_2}.
\end{equation}
	In addition, by \cite[Theorem 3.17]{Ha16} the subcategory $\cat_{\wb\alpha}^H$ is semi-simple for $\wb\alpha\in G\setminus \mathcal{X}$ where 
	$$\mathcal{X}=\left\{\overline{0}, \overline{\frac{1}{2}}\right\}\times \mathbb{C}/ \mathbb{Z} \cup \mathbb{C}/ \mathbb{Z} \times \left \{\overline{0}, \overline{\frac{1}{2}}\right\} \cup \left\{(\overline{\mu}_1, \overline{\mu}_2): \overline{\mu}_1 + \overline{\mu}_2\in\left\{\overline{0}, \overline{\frac{1}{2}}\right\}\right\}.$$
By \cite[Theorem 4.4]{Ha16}, there exists a modified trace $\tv$ on the ideal of projective modules of $\cat^H$. It is checked that these datas satisfy the conditions of Definition \ref{G-modrel} and we have the statement.

\begin{Pro}\label{pre modular}
$\mathscr{C}^H$ is pre-modular $G$-category relative to $(\mathit{G}, Z)$ where $\mathit{G}=\mathbb{C} / \mathbb{Z} \times \mathbb{C}/ \mathbb{Z}$ and $Z=\mathbb{Z}\times \mathbb{Z}\times \mathbb{Z}/2\mathbb{Z}$. Furthermore, it is anomaly free, i.e., $\Delta_+ = \Delta_- = 1$.
\end{Pro}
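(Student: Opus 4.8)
The plan is to check the eight conditions of Definition~\ref{G-modrel} for $\mathit{G}=\mathbb{C}/\mathbb{Z}\times\mathbb{C}/\mathbb{Z}$, $Z=\mathbb{Z}\times\mathbb{Z}\times\mathbb{Z}/2\mathbb{Z}$ and the set $\mathcal{X}$ recalled above, and then to compute the stabilization scalars $\Delta_{\pm}$ of Figure~\ref{Kirby colour}. Several conditions are already available: condition (1) is the $\mathit{G}$-grading $\{\mathscr{C}^{H}_{\overline{\mu}}\}$ described above, the semi-simplicity part of condition (5) is \cite[Theorem 3.17]{Ha16}, and condition (6) is the nonzero modified trace $\tv$ of \cite[Theorem 4.4]{Ha16}. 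The finiteness of the number of simple $Z$-orbits in each semi-simple $\mathscr{C}^{H}_{\overline{\alpha}}$ completes (5): a typical simple module $V_{\mu_{1},\mu_{2}}$ is determined by its highest weight, and the $Z$-action is generated by $\mu\mapsto\mu+(\ell,0)$, $\mu\mapsto\mu+(0,\ell)$ and the parity flip $V\mapsto\overline{V}$, leaving finitely many classes in a fixed coset.

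For the free realization (condition (2)) I would take $\varepsilon^{(a,b,0)}$ to be the one-dimensional weight module on which $e_{i},f_{i}$ act by $0$ and of weight $(\ell a,\ell b)$, together with $\varepsilon^{(0,0,1)}=\overline{\mathbb{I}}$, the parity flip of the unit. These objects are simple, lie in $\mathscr{C}^{H}_{\overline{0}}=\mathscr{C}^{H}_{1}$, and are commutative: $\check{\mathcal{R}}$ acts trivially on them (all $e_{i},f_{i}$ vanish) while the Cartan factor $\mathcal{K}$ and the twist reduce to scalars which one checks to equal $1$ on weights in $(\ell\mathbb{Z})^{2}$. Using the pivot $g=k_{1}^{-\ell}k_{2}^{-2}$ one finds $\qdim(\varepsilon^{(a,b,0)})=\xi^{-\ell^{2}a-2\ell b}=1$, and $\qdim(\overline{\mathbb{I}})=1$ in the conventions of \cite{Ha16}; tensor additivity is addition of weights together with the order-two parity. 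Freeness is immediate in $\mathscr{C}^{H}$, where the $h_{i}$ act by the exact weight: distinct highest weights give non-isomorphic modules and $\overline{V}\not\simeq V$, so $\varepsilon^{t}\otimes V\not\simeq V$ for $t\neq0$.

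The pairing of condition (3) is read off from $\mathcal{K}=\xi^{-h_{1}\otimes h_{2}-h_{2}\otimes h_{1}-2h_{2}\otimes h_{2}}$: for $V\in\mathscr{C}^{H}_{\overline{\mu}}$ the double braiding $c_{V,\varepsilon^{t}}\circ c_{\varepsilon^{t},V}$ is governed solely by $\mathcal{K}$ and acts by $\xi^{2\ell(b\mu_{1}+a\mu_{2}+2b\mu_{2})}$ on $\varepsilon^{(a,b,0)}$, the parity generator contributing $1$; since the weights lie in $(\ell\mathbb{Z})^{2}$ this is $\mathbb{Z}$-bilinear and descends to a map $\mathit{G}\times Z\to\mathbb{C}^{\times}$. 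For condition (4), the set $\mathcal{X}$ above is stable under $\overline{\mu}\mapsto-\overline{\mu}$ (both $\{\overline{0},\overline{1/2}\}$ and the condition $\overline{\mu}_{1}+\overline{\mu}_{2}\in\{\overline{0},\overline{1/2}\}$ are invariant), so $\mathcal{X}^{-1}=\mathcal{X}$; moreover $\mathcal{X}$ is a finite union of cosets of one-dimensional subgroups of the two-dimensional group $\mathit{G}$, so finitely many translates cannot cover $\mathit{G}$ by a dimension argument.

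This reduces the proposition to conditions (7), (8) and the anomaly. For (8), $S(U,V)=F(H(U,V))$ on simple projective (hence typical) $U,V$ is a scalar multiple of $\Id_{V}$, equal to an evaluation of the explicit double braiding which, for typical weights, is a nonvanishing product of factors $\{x\}$. The heart of the statement is (7) together with $\Delta_{+}=\Delta_{-}$. The encircling computation of Figure~\ref{Kirby colour} identifies $\Delta_{\pm}$ with Gauss sums $\sum_{i}d(V_{i})\,\theta_{V_{i}}^{\pm1}$ over representatives of the simple $Z$-orbits of a fixed semi-simple degree, where $\theta_{V_{\mu_{1},\mu_{2}}}=\xi^{Q(\mu)+L(\mu)}$ with $Q$ the quadratic form of matrix $\bigl(\begin{smallmatrix}0&1\\1&2\end{smallmatrix}\bigr)$ read off from $\mathcal{K}$, $L$ a linear correction from the Weyl vector, and $d(V_{i})$ the modified dimension of \cite{Ha16}. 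The main obstacle, and the reason for anomaly-freeness, is the evaluation of these sums: unlike the positive-definite form governing the $\mathfrak{sl}(2)$ twist, $Q$ here is indefinite (determinant $-1$), so $Q$ admits an affine change of variables negating its quadratic part, under which the two Gauss sums are exchanged; once one checks that the modified dimensions are preserved under this change, it forces $\Delta_{+}=\Delta_{-}$, and a direct evaluation shows the common value is nonzero. Condition (7) then holds and the category is anomaly free; rescaling as recalled after Definition~\ref{G-modrel} finally normalizes $\Delta_{+}=\Delta_{-}=1$. The delicate points I anticipate are pinning down the exact twist and modified-dimension formulae so that this sign-flip symmetry is manifest (in particular tracking signs of the factors $\{x\}$) and the common value is provably nonzero, and verifying independence of the chosen semi-simple degree.
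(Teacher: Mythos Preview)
The paper itself does not give a proof of this proposition: the sentence immediately preceding it reads ``It is checked that these datas satisfy the conditions of Definition~\ref{G-modrel} and we have the statement,'' so the verification of conditions (1)--(8) and of $\Delta_{+}=\Delta_{-}=1$ is entirely deferred to \cite{Ha16}. Your proposal is therefore much more explicit than anything the paper provides, and your treatment of conditions (1)--(6) and (8) is essentially correct and matches how \cite{Ha16} sets things up (including the realization $\varepsilon^{(a,b,0)}$ by one--dimensional modules of weight $(\ell a,\ell b)$ and the parity flip for the $\mathbb{Z}/2\mathbb{Z}$ factor).

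There is, however, a genuine gap in your treatment of condition (7) and the anomaly. You write that Figure~\ref{Kirby colour} ``identifies $\Delta_{\pm}$ with Gauss sums $\sum_{i}d(V_{i})\,\theta_{V_{i}}^{\pm1}$.'' This is not what the picture computes: the $\pm1$--framed Kirby--colored circle \emph{encircles a strand $V$}, so unwinding the kink gives
\[
\Delta_{\pm}\,\Id_{V}\;=\;\sum_{k}d(V_{k})\,\theta_{V_{k}}^{\pm1}\,S'(V_{k},V)\,\Id_{V},
\]
and the Hopf--link factor $S'(V_{k},V)$ cannot be dropped. Using Lemma~\ref{S' and modified dimension} one sees that $d(V_{k})S'(V_{k},V)$ carries an extra phase $\xi^{-4\alpha_{2}^{k}\alpha_{2}^{V}-2(\alpha_{2}^{k}\alpha_{1}^{V}+\alpha_{1}^{k}\alpha_{2}^{V})}$ together with a factor $(\ell\, d(V))^{-1}$, so your sum is off by a $V$--dependent bilinear phase and a global constant. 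Your heuristic that the quadratic form coming from $\mathcal{K}$ is indefinite (matrix $\bigl(\begin{smallmatrix}0&-2\\-2&-4\end{smallmatrix}\bigr)$ with determinant $-4$) and hence admits a sign--reversing substitution is the right idea and is indeed the structural reason anomaly--freeness occurs for $\mathfrak{sl}(2|1)$ and not for $\mathfrak{sl}(2)$; but to turn it into a proof you must run that symmetry on the \emph{full} summand $d(V_{k})\,\theta_{V_{k}}^{\pm1}\,S'(V_{k},V)$, check that the substitution permutes the $\ell^{2}$ representatives of $Z$--orbits in a fixed degree, and verify that the product of $\{\cdot\}$--factors in $d(V_{k})$ is preserved (each $\{x\}$ is odd in $x$, so signs must be tracked). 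The actual evaluation in \cite{Ha16} yields $\Delta_{+}=\Delta_{-}=1$ on the nose, not merely after rescaling; your plan would need that explicit computation rather than an appeal to renormalization.
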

Note that the third component of the group $Z$ sets up the parity of modules $\varepsilon^n, n\in Z$ in the realization of $Z$.
\section{Relative modular $G$-category $\cat^H$}
In this section we show $\mathscr{C}^H$ is relative modular $G$-category. This category allows one to construct an invariant of $3$-manifolds which is a main ingredient of the constructions of ETQFTs (see \cite{Mar17}).
\begin{Def}[\cite{Mar17}]\label{Def of modular category}
A pre-modular $\mathit{G}$-category $\mathscr{C}$ relative to $\mathcal{X}$ with modified dimension {\em d} and periodicity group $Z$ is said a modular $G$-category relative to $(\mathit{G}, Z)$ if it satisfies the modular condition, i.e., it exists a relative modularity parameter $\zeta \in \mathbb{C}^*$ such that
 \begin{equation*}
	d(V_i)f_{ij}^{\overline{\mu}}=
	\begin{cases}
	\zeta (\overrightarrow{\coev}_{V_i}\circ \tev_{V_i})\ &\text{if} \  i=j\\
	0 \ &\text{if} \ i \neq j
	\end{cases}
\end{equation*} 
for all $\overline{\mu}, \overline{\nu} \in \mathit{G}\setminus \mathcal{X}$ and for all $i, j \in \overline{\nu}$ which $V_i, V_j$ are not in the same $Z$-orbit, where $f_{ij}^{\overline{\mu}}$ is the morphism determined by the $\mathscr{C}$-colored ribbon tangle depicted in Figure \ref{Rel modularity} under Reshetikhin-Turaev functor $F$. 
\begin{figure}
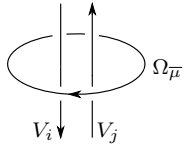

   \centering
   $$
   \begin{array}{ccc}
     \epsh{fig31}{10ex}
 		\put(3,2){\ms{\Omega_{\overline{\mu}}}}
 		\put(-42,-22){\ms{V_i}}
 		\put(-18,-22){\ms{V_j}}
   \end{array}
   $$
	\caption{Representation of the morphism $f_{ij}^{\overline{\mu}}$}
	\label{Rel modularity}
 \end{figure} 
\end{Def} 
\begin{The}\label{main theorem}
Category $\mathscr{C}^H$ of nilpotent weight modules over $\mathcal{U}^{H}$ is modular $\mathit{G}$-category relative to $(\mathit{G}, Z)$.
\end{The}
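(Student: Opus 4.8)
The plan is to leverage Proposition \ref{pre modular}: since $\mathscr{C}^H$ has already been shown to be a relative pre-modular $G$-category, the only thing left to establish is the modular condition of Definition \ref{Def of modular category}, i.e.\ to produce a parameter $\zeta \in \mathbb{C}^*$ for which the morphisms $f_{ij}^{\overline\mu}$ satisfy the stated diagonal/vanishing dichotomy. My strategy is to compute $f_{ij}^{\overline\mu}$ explicitly on weight vectors, reduce the whole condition to an orthogonality relation for a finite character sum, and then verify that this sum is non-degenerate.

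First I would reduce $f_{ij}^{\overline\mu}$ to scalar data. Expanding the Kirby color as $\Omega_{\overline\mu} = \sum_k d(V_k)V_k$ turns $f_{ij}^{\overline\mu}$ into a sum, over orbit representatives $V_k$ of degree $\overline\mu$, of the open Hopf-link endomorphisms in which a loop colored $V_k$ links the strands $V_i$ and $V_j$. Each such building block is governed by the $S$-matrix morphisms $S(V_k,V_i)=F(H(V_k,V_i))$ of condition (8) of Definition \ref{G-modrel}; since $V_i,V_j$ are simple, Schur's Lemma forces $S(V_k,V_i)=S'_{k,i}\,\Id_{V_i}$ for scalars $S'_{k,i}\in\mathbb{C}$. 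I would compute these scalars directly from the $R$-matrix \eqref{R-matrix}--\eqref{R-matrix 1}: the double braiding between a weight vector of $V_k$ and the weight-space of $V_i$ acts through the diagonal part $\mathcal{K}=\xi^{-h_1\otimes h_2 - h_2\otimes h_1 - 2h_2\otimes h_2}$, contributing a phase $\xi^{B(w,\mu_i)}$ for an explicit bilinear pairing $B$, together with parity signs arising from the odd vectors $w_{0,1,p},w_{1,0,p}$ and the correction factors $(1-e_3\otimes f_3)(1-e_2\otimes f_2)$ in $\check{\mathcal{R}}$. Summing over the weights of $V_k$ then yields $S'_{k,i}$ as a closed two-variable exponential sum.

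Next I would assemble the full expression. After inserting the modified dimensions $d(V_k)$ computed in \cite{Ha16}, the quantity $d(V_i)f_{ij}^{\overline\mu}$ becomes a Gauss/character sum over the finite set of simple $Z$-orbits of $\mathscr{C}^{H}_{\overline{\mu}}$. Because $\mathcal{K}$ is additive in the weights, the phases organise into the orthogonality relation for the characters of this finite abelian group, evaluated at the \emph{difference} of the weights of $V_i$ and $V_j$ (the dualization implicit in the appearance of $\rcoev_{V_i}\circ\tev_{V_i}$ on the right-hand side makes $\mu_i-\mu_j$, rather than $\mu_i+\mu_j$, the relevant class): the sum vanishes precisely when $V_i,V_j$ lie in distinct $Z$-orbits and collapses to a single nonzero constant when $i=j$. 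That constant is independent of $\overline\mu,\overline\nu$ and of $i$, and is the sought relative modularity parameter $\zeta$; I would identify it, up to the normalization already fixed by anomaly-freeness, with the common value $\Delta_+=\Delta_-$, thereby confirming $\zeta\in\mathbb{C}^*$.

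I expect the main obstacle to be the explicit evaluation of this character sum in the presence of the super signs. Unlike the $\mathfrak{sl}(2)$ model of \cite{CFNP16}, the typical $\mathfrak{sl}(2|1)$-modules are $4\ell$-dimensional with both even and odd weight vectors, so each contribution to $S'_{k,i}$ carries alternating signs $(-1)^{\rho}$ together with the $e_2,f_2,e_3,f_3$ corrections, all of which must be tracked carefully. The crux is to show that the resulting exponential sum over the orbit group (reflecting the factor $\mathbb{Z}\times\mathbb{Z}$ of $Z=\mathbb{Z}\times\mathbb{Z}\times\mathbb{Z}/2\mathbb{Z}$, with the $\mathbb{Z}/2\mathbb{Z}$ factor accounting for the parity flip $V\mapsto\overline V$) is genuinely non-degenerate, i.e.\ that the bilinear form $B$ induced by $\mathcal{K}$ is unimodular modulo the realization lattice of $Z$. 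Once this non-degeneracy is in hand, the diagonal value factors as a product of elementary Gauss sums that is manifestly nonzero, the off-diagonal terms vanish by character orthogonality, and the modular condition --- hence the theorem --- follows.
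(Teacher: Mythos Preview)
Your overall orientation is right --- only the relative modularity condition remains after Proposition~\ref{pre modular}, and the non-degeneracy of the bilinear form coming from $\mathcal{K}$ is indeed the combinatorial heart of the matter --- but there is a genuine gap in the reduction step.

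The endomorphism $f_{ij}^{\overline\mu}$ lives in $\End_{\mathcal{U}^H}(V_i\otimes V_j^{*})$, and $V_i\otimes V_j^{*}$ has degree $\overline 0\in\mathcal{X}$. In particular it is typically \emph{not} semisimple, so you cannot decompose it into simples and replace each encircling by a scalar. Your passage from ``the loop $V_k$ encircles $V_i$ and $V_j$'' to ``each building block is governed by $S'_{k,i}$'' silently assumes either that $S(V_k,V_i\otimes V_j^{*})$ factors as $S'_{k,i}\cdot S'_{k,j^{*}}\Id$, or that $V_i\otimes V_j^{*}$ splits into simples on which the encircling is scalar; neither holds here. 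The character/Gauss sum you describe does compute the \emph{modified trace} of $f_{ij}^{\overline\mu}$ (this is exactly how the paper evaluates the proportionality constant at the very end), but it does not by itself tell you that the morphism vanishes for $i\neq j$ or has rank one for $i=j$.

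The paper supplies the two missing structural facts by different, non-computational arguments. For $i\neq j$ it performs a handle-slide of a probe circle $V_k$ along the $\Omega_{\overline\mu}$-component to obtain $S'(V_k,V_i)\,f_{ij}^{\overline\mu}=S'(V_k,V_j)\,f_{ij}^{\overline\mu+\overline\nu}$; varying $V_k$ and using that the bilinear form $B$ (the same $2\times 2$ matrix you anticipate) is non-degenerate on $(\mathbb{Z}/\ell\mathbb{Z})^2$ then forces $f_{ij}^{\overline\mu}=0$ without ever computing it. For $i=j$ it proves a separate lemma (Lemma~\ref{vector UH-invariant}): comparing $c_{W,V_k}$ and $c_{V_k,W}^{-1}$ on the image of $f_{ii}^{\overline\mu}$ shows that any vector in that image is annihilated by $e_1,e_2,f_1,f_2$ and fixed by $k_1,k_2$, so $f_{ii}^{\overline\mu}$ factors through the one-dimensional space of invariants and is therefore proportional to $\rcoev_{V_i}\circ\tev_{V_i}$. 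Only then does the character-sum computation you outline enter, to evaluate the constant $\zeta=1$. So your proposal correctly locates the final arithmetic step, but misses the two key ideas (handle-slide and invariance of the image) that justify why $f_{ij}^{\overline\mu}$ has the required shape before one can trace it.
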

To prove the theorem we need the lemmas below.
\begin{Lem}\label{vector UH-invariant}
For $ \overline{\nu} \in \mathit{G}\setminus \mathcal{X}$, let $V_i\in \mathscr{C}_{\overline{\nu}}^H$. 
Then any vector $y$ in the image of $f_{ii}^{\overline{\mu}}$ is $\mathcal{U}^H$-invariant.
\end{Lem}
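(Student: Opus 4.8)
The plan is to show that any vector $y$ in the image of $f_{ii}^{\overline{\mu}}$ is annihilated by the augmentation ideal of $\mathcal{U}^H$, equivalently that $e_j y = f_j y = 0$ and $h_j y = 0$ for $j=1,2$. The morphism $f_{ii}^{\overline{\mu}}$ is, under the Reshetikhin--Turaev functor, the operator associated to encircling the strand colored by $V_i$ with a Kirby-colored loop $\Omega_{\overline{\mu}}$. The key structural fact I would invoke is that such an encircling operator is a morphism in $\mathscr{C}^H$ from $V_i$ to $V_i$; since the image lies in the fixed module $V_i$, the point is not that $f_{ii}^{\overline{\mu}}$ commutes with the action (which it automatically does, being a morphism), but that the \emph{image} of the encircling map consists of invariant vectors because of how the Kirby color is summed over a whole $Z$-orbit of the relevant grading.

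The central computation I would carry out is to expand $f_{ii}^{\overline{\mu}}$ explicitly using the $\mathcal{R}$-matrix from \eqref{R-matrix} and \eqref{R-matrix 1}. An encircling of $V_i$ by a closed loop produces, via the double braiding, an operator of the form $\ptr$ of $\mathcal{R}_{21}\mathcal{R}_{12}$ evaluated against the Kirby color. I would then use the standard identity that the partial (quantum) trace of the double braiding over a loop labeled by $\Omega_{\overline{\mu}}$ acts as a scalar on each weight space, but crucially that the resulting vector is killed by the raising and lowering operators. Concretely, I expect to show that applying any generator $e_j$ or $f_j$ to $y$ and then sliding that generator around the encircling loop (using naturality of the braiding and the fact that $\Omega_{\overline{\mu}}$ sums over a complete set of simple objects in a $Z$-orbit, hence is invariant under the $Z$-action $\varepsilon^t \otimes -$) forces the result to vanish. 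The $\mathcal{K}$ factor $\xi^{-h_1\otimes h_2 - h_2 \otimes h_1 - 2h_2\otimes h_2}$ is what makes the weight bookkeeping work: it ensures the encircling operator only sees the coset grading, and summing over the orbit cancels contributions of nonzero weight.

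The cleanest route is probably representation-theoretic rather than diagrammatic: a vector is $\mathcal{U}^H$-invariant precisely when it spans a trivial (one-dimensional, weight-zero) submodule, i.e.\ it lies in the image of a projection onto the invariants. I would argue that encircling by a Kirby color of degree $\overline{\mu}$ realizes exactly such a projection onto the $\mathcal{U}^H$-invariant part of $V_i$, because the loop integral $\sum_i d(V_i) (\text{double braiding})$ over a full orbit behaves like a Haar-type averaging and thus lands in the isotypic component of the trivial object $\mathbb{I}$. Since $\End_{\mathscr{C}^H}(\mathbb{I}) \cong \mathbb{C}$ and the invariants of a weight module are spanned by highest-and-lowest weight vectors of trivial weight, the image is forced to consist of invariant vectors.

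The main obstacle I anticipate is verifying precisely that the encircling operator lands in the invariants rather than merely commuting with the action. Commutation is free from functoriality, but \emph{invariance of the image} is a genuinely stronger statement that depends on the specific algebraic form of $\mathcal{R}$ and on $\Omega_{\overline{\mu}}$ ranging over an entire $Z$-orbit (so that the weight-shifting terms telescope away). I would therefore spend most of the effort on the explicit commutator computation: showing $e_j \cdot y = 0$ by pushing $e_j$ through the $\mathcal{R}$-matrix expansion and using the Serre and nilpotency relations $e_2^2 = f_2^2 = 0$, $e_1^\ell = f_1^\ell = 0$, together with the summation over $p \in \{0,\dots,\ell-1\}$ in \eqref{R-matrix}, to see the cancellation. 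The odd/super signs arising from the parity of $e_2, f_2$ will require care, but the nilpotency $e_2^2 = f_2^2 = 0$ keeps the expansion finite and manageable.
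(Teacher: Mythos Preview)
Your proposal has two genuine problems.

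First, you misidentify the morphism. The endomorphism $f_{ii}^{\overline{\mu}}$ is not an operator on $V_i$: the graph in Figure~\ref{Rel modularity} has \emph{two} through-strands, and in the proof of Theorem~\ref{main theorem} one has $f_{ii}^{\overline{\mu}}\in\End_{\mathcal{U}^H}(V_i\otimes V_i^{*})$. If $f_{ii}^{\overline{\mu}}$ were an endomorphism of the typical simple module $V_i$, the lemma would be vacuous or false, since $V_i$ has no nonzero $\mathcal{U}^H$-invariant vectors. All of your subsequent remarks about ``commuting with the action automatically'' and ``landing in the invariants of $V_i$'' are therefore aimed at the wrong target.

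Second, and more seriously, none of the three mechanisms you sketch is the one that actually works, and none is carried far enough to constitute an argument. The paper does not expand $f_{ii}^{\overline{\mu}}$ itself via $\ptr$ of a double braiding, nor does it interpret the Kirby sum as a Haar-type projector, nor does it ``slide a generator $e_j$ around the loop'' (that phrase has no diagrammatic meaning: one slides strands, not algebra elements). The actual key step is a handle-slide of an \emph{auxiliary} strand $V_k\in\mathscr{C}^H_{\overline{\nu}}$ along the Kirby-colored circle (Lemma~4.9 of \cite{FcNgBp14}), which yields the identity
\[
c_{W,V_k}\circ\bigl(f_{ii}^{\overline{\mu}}\otimes\Id_{V_k}\bigr)
\;=\;
c_{V_k,W}^{-1}\circ\bigl(f_{ii}^{\overline{\mu}}\otimes\Id_{V_k}\bigr),
\qquad W=V_i\otimes V_i^{*}.
\]
One then evaluates both sides on $y\otimes v$ with $y=f_{ii}^{\overline{\mu}}(x)$ for a weight-$0$ vector $x\in W$, expands $c_{W,V_k}=\tau^s\circ\check{\mathcal{R}}\mathcal{K}$ and $c_{V_k,W}^{-1}=(S\otimes\Id)(\mathcal{R})\circ\tau^s$ to first order in the PBW monomials, and reads off $e_jy=f_jy=0$ from the linear terms. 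The weight constraints $k_iy=y$ follow from the commutator relations and the fact that $\ell$ is odd. Your proposal contains no trace of this auxiliary-strand handle-slide identity, which is precisely the ingredient that converts ``$f_{ii}^{\overline{\mu}}$ is a morphism'' into ``its image is invariant''.
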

\begin{proof}
Let $V_k \in \mathscr{C}_{\overline{\nu}}^H$, by \cite[Lemma 4.9]{FcNgBp14} we can do a handle-slide move on the circle component of the graph representing $f_{ii}^{\overline{\mu}} \otimes \Id_{V_k}$ to obtain the equalities
$$c_{W, V_k}\circ (f_{ii}^{\overline{\mu}} \otimes \Id_{V_k})=c_{V_k, W}^{-1}\circ (f_{ii}^{\overline{\mu}+\overline{\nu}} \otimes \Id_{V_k})=c_{V_k, W}^{-1}\circ (f_{ii}^{\overline{\mu}} \otimes \Id_{V_k})$$
where $W=V_i\otimes V_{i}^*$.
The braidings $c_{W, V_k}, c_{V_k, W}^{-1}: W \otimes V_k \rightarrow V_k \otimes W$ are given by
$c_{W, V_k}=\tau^{s}\circ \mathcal{R}$ and $c_{V_k, W}^{-1}=\mathcal{R}^{-1}\circ \tau^{s}$ where $\mathcal{R}=\check{\mathcal{R}}\mathcal{K}$, see Equations \eqref{R-matrix} and \eqref{R-matrix 1}.

Let $x\neq 0$ be a weight vector of weight $0$ of $W$ and $v \in V_k$ be an even weight vector of weight $\nu=(\nu_1, \nu_2)$, set $y=f_{ii}^{\overline{\mu}}(x) \in W$. 

Let $W_{+}^{'}$ be the vector space generated by $\{e_{1}^{i_1}e_{3}^{i_3}e_{2}^{i_2} y \ |\ i_1+i_2+i_3 > 1\ \text{for}\ 0\leq i_1\leq\ell -1,\ 0\leq i_2, i_3 \leq 1\}$, $W_{-}^{'}$ be the vector space generated by $\{f_{1}^{i_1}f_{3}^{i_3}f_{2}^{i_2} v \ |\ i_1+i_2+i_3 > 1\ \text{for}\ 0\leq i_1\leq\ell -1,\ 0\leq i_2, i_3 \leq 1\}$, $V_{+}^{'}$ be the vector space generated by $\{e_{1}^{i_1}e_{3}^{i_3}e_{2}^{i_2} v \ |\ i_1+i_2+i_3 > 1\ \text{for}\ 0\leq i_1\leq\ell -1,\ 0\leq i_2, i_3 \leq 1\}$ and $V_{-}^{'}$ be the vector space generated by $\{f_{1}^{i_1}f_{3}^{i_3}f_{2}^{i_2} y \ |\ i_1+i_2+i_3 > 1\ \text{for}\ 0\leq i_1\leq\ell -1,\ 0\leq i_2, i_3 \leq 1\}$.
Because the weight of $x$ is $0$ then $\mathcal{K}(y \otimes v)=y \otimes v$. Hence 
\begin{align*}
&c_{W, V_k}(y \otimes v)=\tau^{s}\circ \check{\mathcal{R}}\mathcal{K}(y \otimes v)\\
&=v \otimes y +(\xi-\xi^{-1})f_1 v\otimes e_1 y +f_3 v\otimes e_3 y + f_2v\otimes e_2y+W_{-}^{'}\otimes W_{+}^{'}
\end{align*}
and, 
\begin{align*}
&c_{V_k, W}^{-1}(y \otimes v)=\mathcal{R}^{-1}\circ \tau^{s}(y \otimes v)=(S\otimes \Id_{\mathcal{U}^H})(\mathcal{R})(v \otimes y)\\
&=(S\otimes \Id_{\mathcal{U}^H})\left(v\otimes y +(\xi-\xi^{-1})e_1v\otimes f_1y -e_3v \otimes f_3y-e_2v\otimes f_2y + V_{+}^{'}\otimes V_{-}^{'}\right)\\
&=v\otimes y -(\xi-\xi^{-1})k_1e_1v\otimes f_1y +k_1k_2e_3v \otimes f_3y+k_2e_2v\otimes f_2y + S\left(V_{+}^{'}\right)\otimes V_{-}^{'}.
\end{align*}
Identify the two right hands of the equations above, on gets $e_1y=f_1y=0$ and $e_2y=f_2y=0$.
By the relations $e_1f_1-f_1e_1=\frac{k_1-k_1^{-1}}{\xi-\xi^{-1}}, e_2f_2+f_2e_2=\frac{k_2-k_2^{-1}}{\xi-\xi^{-1}}$, it implies that $k_{i}^2y=y$ for $i=1, 2$ and also since $k_i$ act as $\xi^{h_i}$ and the weights of $W$ are in $\mathbb{Z}\times \mathbb{Z}\times \mathbb{Z}/2\mathbb{Z}$, we have that the eigenvalues of $k_i$ are in $\xi^{\mathbb{Z}}$ which does not contain $-1$ (note that $\ell$ is odd). Thus $k_iy=y$ for $i=1, 2$ and $y$ is an invariant vector of $W$.
\end{proof}
Recall that $S'(V,W)$ is the number complex determined by 
$$S'(V,W) =\left< \epsh{fig4}{7ex}
      \put(-6,-8){\ms{V}}
			\put(-11,21){\ms{W}} \right>$$
where the bracket of a diagram $T$ is defined by $F(T)=<T>\Id_W$ with the simple module $W$. Following are some properties of $S'(V,W)$ and the modified dimension $d$ where $d(\mu):=d(V_\mu)$ (for details, see \cite{Ha16}).
\begin{Lem}\label{S' and modified dimension}
Let $V_\mu$ be a typical module and $V'_{\mu'}$ be a simple module with respective highest weight $\mu$ and $\mu'$. Then 
\begin{enumerate}
\item[1.] $d(\mu)=\dfrac{\{\mu_1+1\}}{\ell \{\ell \mu_{1}\}\{\mu_2\}\{\mu_2+\mu_1+1\}}=\dfrac{\{\alpha_{1}\}}{\ell \{\ell \alpha_{1}\}\{\alpha_{2}\}\{\alpha_{1}+\alpha_{2}\}}$ where $(\alpha_{1}, \alpha_{2})=(\mu_{1}-\ell+1, \mu_{2}+\frac{\ell}{2})$.
\item[2.] $S'(V_\mu,V'_{\mu'}):=S'(\mu,\mu')=\xi^{-4\alpha_{2}\alpha'_{2}-2(\alpha_{2}\alpha'_{1}+\alpha_{1}\alpha'_{2})} \dfrac{\{\ell\alpha'_1\}\{\alpha'_2\}\{\alpha'_2+\alpha'_1\}}{\{\alpha'_1\}}$ where $(\alpha'_{1}, \alpha'_{2})=(\mu'_{1}-\ell+1, \mu'_{2}+\frac{\ell}{2})$.
\item[3.] $d(\mu')S'(\mu, \mu')=d(\mu)S'(\mu', \mu).$
\end{enumerate}
\end{Lem}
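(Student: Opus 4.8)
My plan is to establish the three parts in the order (2), (1), (3): the formulas (1) and (2) are explicit computations from the module structure of $V_\mu$ and the $R$-matrix $\mathcal{R}=\check{\mathcal{R}}\mathcal{K}$ of \eqref{R-matrix}--\eqref{R-matrix 1}, while (3) will follow formally from them together with the defining properties of the modified trace.

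For (2), I would first observe that, because $V'_{\mu'}$ is simple, encircling its strand with a $V_\mu$-coloured loop yields a scalar operator; hence $S'(\mu,\mu')\Id_{V'_{\mu'}}$ is obtained by taking the quantum trace over the $V_\mu$-factor of the double braiding $c_{V'_{\mu'},V_\mu}\circ c_{V_\mu,V'_{\mu'}}$, and I would read the scalar off by acting on the highest weight vector $w'_{0,0,0}$ of $V'_{\mu'}$. Splitting $\mathcal{R}=\check{\mathcal{R}}\mathcal{K}$, the factor $\mathcal{K}=\xi^{-h_1\otimes h_2-h_2\otimes h_1-2h_2\otimes h_2}$ acts on each weight line of $V_\mu$ by a power of $\xi$; evaluated against the weight of $w'_{0,0,0}$ these assemble into the Gaussian exponent $\xi^{-4\alpha_2\alpha'_2-2(\alpha_2\alpha'_1+\alpha_1\alpha'_2)}$. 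In $\check{\mathcal{R}}$ the summands $e_1^{i}\otimes f_1^{i}$ with $i>0$ together with the odd factors arising from $(1-e_3\otimes f_3)(1-e_2\otimes f_2)$ should either annihilate $w'_{0,0,0}$ or move off the diagonal and so drop out of the trace; summing the surviving diagonal contribution over the basis $\{w_{\rho,\sigma,p}\}$ of $V_\mu$, weighted by the pivotal element $g=k_1^{-\ell}k_2^{-2}$, should produce the factor $\qn{\ell\alpha'_1}\qn{\alpha'_2}\qn{\alpha'_2+\alpha'_1}/\qn{\alpha'_1}$, giving the stated formula.

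For (1), I note that $d(\mu)=\tv_{V_\mu}(\Id_{V_\mu})$ cannot be the naive quantum dimension, since $\qdim(V_\mu)$ vanishes for a typical module (the $p$-summation over the $\ell$-dimensional weight line contributes $\sum_{p}\xi^{-2p}=0$); instead I would extract $d(\mu)$ from the partial-trace axiom $\tv_{V\otimes W}=\tv_V\circ\ptr_R$. In the direction of the nilpotent $\mathfrak{sl}(2)$-subalgebra generated by $e_1,f_1,k_1$ the computation reduces to the $\mathfrak{sl}(2)$ case treated in \cite{FcNgBp14}, accounting for the factor $\qn{\mu_1+1}/\bp{\ell\qn{\ell\mu_1}}$, while the four fermionic vectors $w_{\rho,\sigma,p}$ ($\rho,\sigma\in\{0,1\}$) should contribute $\qn{\mu_2}\qn{\mu_1+\mu_2+1}$ to the denominator. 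Typicality guarantees $\qn{\mu_2}\qn{\mu_1+\mu_2+1}\neq0$, so $d(\mu)$ is well defined and nonzero, and the substitution $(\alpha_1,\alpha_2)=(\mu_1-\ell+1,\mu_2+\tfrac{\ell}{2})$ rewrites the answer in the symmetric second form.

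Finally, (3) will follow from the ambidexterity of $\tv$. Taking $L$ to be the closed Hopf link coloured by $V_\mu$ and $V'_{\mu'}$ (with $V'_{\mu'}$ projective, so that $d(\mu')$ is defined), cutting the $V'_{\mu'}$-component open and closing it with $\tv_{V'_{\mu'}}$ evaluates $L$ as $d(\mu')S'(\mu,\mu')$, whereas cutting the $V_\mu$-component open and closing it with $\tv_{V_\mu}$ evaluates the same $L$ as $d(\mu)S'(\mu',\mu)$; since $\tv$ is a trace on the ideal of projectives the two renormalizations agree, yielding $d(\mu')S'(\mu,\mu')=d(\mu)S'(\mu',\mu)$. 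When $V'_{\mu'}$ is itself typical this can also be verified directly from (1) and (2): the exponent in (2) is symmetric under interchange of the primed and unprimed variables, and the quantum-number factors there cancel those of $d$ in (1), so both sides collapse to $\tfrac{1}{\ell}\,\xi^{-4\alpha_2\alpha'_2-2(\alpha_2\alpha'_1+\alpha_1\alpha'_2)}$. I expect the principal obstacle to be the bookkeeping in (2): one must track the action of every term of $\check{\mathcal{R}}$ on the full $4\ell$-dimensional basis --- including the super signs carried by the odd vectors $w_{0,1,p},w_{1,0,p}$ and the contributions of $e_3\otimes f_3$ and $e_2\otimes f_2$ --- before the diagonal sum against $g$ can be carried out.
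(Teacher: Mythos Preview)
The paper does not actually prove this lemma: immediately before the statement it reads ``Following are some properties of $S'(V,W)$ and the modified dimension $d$ (for details, see \cite{Ha16})'', and after the lemma the text resumes with the proof of Theorem~\ref{main theorem}. So there is no argument in the paper to compare your proposal against; the lemma is imported wholesale from \cite{Ha16}.

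That said, your outline is the standard route such computations take and is correct in shape. A couple of points are worth sharpening. In (2), the monodromy $R_{21}R_{12}$ contributes a $\mathcal{K}$-factor that depends on the running weight $(\mu_1+\rho-\sigma-2p,\mu_2+\sigma+p)$, not just on the highest weights; the clean Gaussian $\xi^{-4\alpha_2\alpha'_2-2(\alpha_2\alpha'_1+\alpha_1\alpha'_2)}$ only emerges after you combine that $(\rho,\sigma,p)$-dependent piece with the pivotal weight $g=k_1^{-\ell}k_2^{-2}$ and carry out the basis sum, so you should not present it as coming from $\mathcal{K}$ alone. Likewise, the claim that the $i>0$ terms of $\check{\mathcal{R}}$ and the odd factors ``drop out of the trace'' needs a line of justification: in the double braiding these terms act nontrivially on both tensor factors, and what makes them disappear is either that they annihilate $w'_{0,0,0}$ on the $V'_{\mu'}$-side or that on the $V_\mu$-side they shift weights and hence have vanishing diagonal against the semisimple trace. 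For (1), invoking the $\mathfrak{sl}(2)$ computation in \cite{FcNgBp14} is a heuristic, not a derivation; the clean way is either to fix $d$ by the open Hopf link normalisation (so that (1) is forced by (2) and (3)) or to run the ambidexterity argument on a specific tensor product. Your argument for (3) via the cut-independence of $F'$ is exactly the right one and requires no further input.
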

\begin{proof}[Proof of Theorem \ref{main theorem}]
By Proposition \ref{pre modular}, the category $\mathscr{C}^H$ is pre-modular $G$-category relative to $(\mathit{G}, Z)$. Now we show that this category is a relative modular $G$-category. It is necessary to verify the relative modularity condition. We consider the morphism $f$ which represents by the diagram as in Figure \ref{rel 1}. 
\begin{figure}
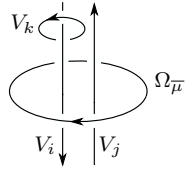

   \centering
   $$
   \begin{array}{ccc}
     \epsh{fig32}{12ex}
 		\put(3,2){\ms{\Omega_{\overline{\mu}}}}
 		\put(-51, 24){\ms{V_k}}
 		\put(-42,-22){\ms{V_i}}
 		\put(-18,-22){\ms{V_j}}
   \end{array}
   $$
   \caption{Representation of the morphism $f$}
   \label{rel 1}
\end{figure}
By the handle-slide the circle colored by $V_k$ along the circle of $f_{ij}^{\overline{\mu}}$ and an isotopy we have two equalities given by the diagrams as in Figure \ref{rel 2}.
\begin{figure}
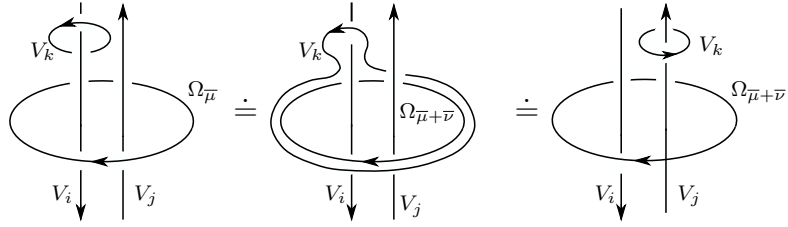

   \centering
   $$
   \begin{array}{ccc}
     \epsh{fig32}{16ex}
 		\put(-2,9){\ms{\Omega_{\overline{\mu}}}}
 		\put(-61, 24){\ms{V_k}}
 		\put(-53,-30){\ms{V_i}}
 		\put(-22,-30){\ms{V_j}}
 	\quad  \doteq  \epsh{fig33}{16ex}
 		\put(-28,1){\ms{\Omega_{\overline{\mu}+\overline{\nu}}}}
 		\put(-66, 24){\ms{V_k}}
 		\put(-56,-30){\ms{V_i}}
 		\put(-28,-34){\ms{V_j}}
 	\quad  \doteq  \epsh{fig34}{16ex}
 	\put(-2,9){\ms{\Omega_{\overline{\mu}+\overline{\nu}}}}
 		\put(-14, 26){\ms{V_k}}
 		\put(-53,-30){\ms{V_i}}
 		\put(-22,-30){\ms{V_j}}
   \end{array}
   $$
   \caption{Sliding of the circle colored by $V_k$ along the closed component of $f_{ij}^{\overline{\mu}}$}
   \label{rel 2}
\end{figure}
It follows that $$S'(V_k, V_i)f_{ij}^{\overline{\mu}}=S'(V_k, V_j)f_{ij}^{\overline{\mu}+\overline{\nu}} \ \text{for all}\ V_k \in \mathscr{C}_{\overline{\nu}}^H.$$ 
It implies 
$$f_{ij}^{\overline{\mu}+\overline{\nu}}=\dfrac{S'(V_{k_1}, V_i)}{S'(V_{k_1}, V_j)}f_{ij}^{\overline{\mu}}=\dfrac{S'(V_{k_2}, V_i)}{S'(V_{k_2}, V_j)}f_{ij}^{\overline{\mu}} \ \text{for} \ V_{k_1}, V_{k_2} \in \mathscr{C}_{\overline{\nu}}^H.$$
We denote the highest weights of $V_i, V_j, V_{k_1}$ and $V_{k_2}$ by $(\nu_1 +i_1, \nu_2+i_2), (\nu_1 +j_1, \nu_2+j_2), (\nu_1 + s_1, \nu_2+s_2)$ and $(\nu_1 + t_1, \nu_2+t_2)$ for $0\leq i_1, i_2, j_1, j_2, s_1, s_2, t_1, t_2\leq\ell -1$. By Lemma \ref{S' and modified dimension} we have
\begin{align*}
&S'(V_{k_1}, V_i)=\xi^{-4(\nu_2+s_2)(\nu_2+i_2)-2\left( (\nu_2+s_2)(\nu_1 +i_1)+(\nu_1 + s_1)(\nu_2+i_2)\right)}\frac{1}{\ell d(V_i)},\\
&S'(V_{k_1}, V_j)=\xi^{-4(\nu_2+s_2)(\nu_2+j_2)-2\left( (\nu_2+s_2)(\nu_1 +j_1)+(\nu_1 + s_1)(\nu_2+j_2)\right)}\frac{1}{\ell d(V_j)},\\
&S'(V_{k_2}, V_i)=\xi^{-4(\nu_2+t_2)(\nu_2+i_2)-2\left( (\nu_2+t_2)(\nu_1 +i_1)+(\nu_1 + t_1)(\nu_2+i_2)\right)}\frac{1}{\ell d(V_i)},\\
&S'(V_{k_2}, V_j)=\xi^{-4(\nu_2+t_2)(\nu_2+j_2)-2\left( (\nu_2+t_2)(\nu_1 +j_1)+(\nu_1 + t_1)(\nu_2+j_2)\right)}\frac{1}{\ell d(V_j)}.
\end{align*}
Hence 
\begin{align*}
&\dfrac{S'(V_{k_1}, V_i)}{S'(V_{k_1}, V_j)}=\xi^{-4(\nu_2+s_2)(i_2-j_2)-2\left( (\nu_2+s_2)(i_1-j_1)+(\nu_1 + s_1)(i_2-j_2)\right)}\dfrac{d(V_j)}{d(V_i)}, \\
&\dfrac{S'(V_{k_2}, V_i)}{S'(V_{k_2}, V_j)}=\xi^{-4(\nu_2+t_2)(i_2-j_2)-2\left( (\nu_2+t_2)(i_1-j_1)+(\nu_1 + t_1)(i_2-j_2)\right)}\dfrac{d(V_j)}{d(V_i)}.
\end{align*}
We see that $$\dfrac{S'(V_{k_1}, V_i)}{S'(V_{k_1}, V_j)}.\dfrac{S'(V_{k_2}, V_j)}{S'(V_{k_2}, V_i)}=\xi^{-4(s_2-t_2)(i_2-j_2)-2\left( (s_2-t_2)(i_1-j_1)+(s_1-t_1)(i_2-j_2)\right)}$$
and the term $-4(s_2-t_2)(i_2-j_2)-2\left( (s_2-t_2)(i_1-j_1)+(s_1-t_1)(i_2-j_2)\right)$ is determined by a symmetric bilinear non-degenerate $B$ from $\left(\mathbb{Z}/\ell \mathbb{Z}\right)^2 \times \left(\mathbb{Z}/\ell \mathbb{Z}\right)^2$ to $\mathbb{Z}/\ell \mathbb{Z}$ which has the matrix $B=(b_{ij})_{2\times 2}$ where $b_{11}=0,\ b_{12}=b_{21}=-2$ and $b_{22}=-4$. It follows that for all $i\neq j \in \left(\mathbb{Z}/\ell \mathbb{Z}\right)^2$ it exists $k_1 \neq k_2 \in \left(\mathbb{Z}/\ell \mathbb{Z}\right)^2$ such that $B(i-j, k_1-k_2)\neq 0$. Thus for all $i \neq j \in \overline{\nu}$ it exists $k_1 \neq k_2 \in \overline{\nu}$ such that $\dfrac{S'(V_{k_1}, V_i)}{S'(V_{k_1}, V_j)}\neq\dfrac{S'(V_{k_2}, V_i)}{S'(V_{k_2}, V_j)}$, it implies that $f_{ij}^{\overline{\mu}}=0$ if $i\neq j$.

If $i=j$ we have $f_{ii}^{\overline{\mu}}=f_{ii}^{\overline{\mu}+\overline{\nu}}$ for $\overline{\mu},\overline{\nu}\in \mathit{G}\setminus \mathcal{X}$. We see by Lemma \ref{vector UH-invariant} that $f_{ii}^{\overline{\mu}} \in \End_{\mathcal{U}^H}(V_i \otimes V_{i}^{*})$ factors through invariant vectors of $W=V_i \otimes V_{i}^*$. As $\Hom_{\mathcal{U}^H}(V_i \otimes V_{i}^{*}, \mathbb{C})\simeq \Hom_{\mathcal{U}^H}( \mathbb{C}, V_i \otimes V_{i}^{*})\simeq \End_{\mathcal{U}^H}(V_i)\simeq\mathbb{C}\Id_{V_i}$ then these imply that two morphisms $f_{ii}^{\overline{\mu}}$ and $\overrightarrow{\coev}_{V_i}\circ \tev_{V_i}$ are proportional, i.e., there is a $\lambda \in \mathbb{C}^*$ such that $$f_{ii}^{\overline{\mu}}=\lambda\left(\overrightarrow{\coev}_{V_i}\circ \tev_{V_i}\right).$$ 
Next we compute $\lambda$ from this equality. We consider the value $F'$ of the braid closure of the graphs associated with this equality. The value associated with $f_{ii}^{\overline{\mu}}$ is 
\begin{align*}
F'\left(
	\begin{array}{ccc}	
      \epsh{fig35}{10ex}
 		\put(-62,-12){\ms{\Omega_{\overline{\mu}}}}
 		\put(-35,2){\ms{V_i}}
 		\put(-20,-30){\ms{V_i}}
 	\end{array}
   \right)
&=\sum_{k}F'\left(d(V_k)
	\begin{array}{ccc}	
      \epsh{fig36}{10ex}
 		\put(-48,-24){\ms{V_k}}
 		\put(-85,-29){\ms{V_i}}
 		\put(-16,-29){\ms{V_i}}
 	\end{array}
   \right)\\
&=\sum_{k}F'\left(d(V_k)
	\begin{array}{ccc}	
      \epsh{fig37_1}{10ex} \ \# \ \epsh{fig37_2}{10ex} 
 		\put(-105,-24){\ms{V_k}}
 		\put(-172,-26){\ms{V_i}}
 		\put(-48,-26){\ms{V_k}}
 		\put(-10,-25){\ms{V_i}}
 	\end{array}
   \right)\\
&=\sum_{k}F'\left(
	\begin{array}{ccc}	
      \epsh{fig37_1}{10ex} 
 		\put(-78,-26){\ms{V_k}}
 		\put(-16,-25){\ms{V_i}} 
 	\end{array}
	\right)
	F'\left(
	\begin{array}{ccc}
		\epsh{fig37_2}{10ex}
 		\put(-68,-21){\ms{V_k}}
 		\put(-8,-25){\ms{V_i}}
 		\end{array}
	\right)\\
&=\sum_{k}d(V_i)S'\left(V_k, V_i\right) d(V_i)S'\left(V_{k}^{*}, V_i\right)\\
&=\sum_{k} d^{2}(V_i)S'\left(V_k, V_i\right) S'\left(V_{k}^{*}, V_i\right)
\end{align*}  
where $\Omega_{\overline{\mu}}=\sum_{k \in \overline{\mu}}d(V_k)V_k$ and the second equality by $F'(L_1 \#_{V} L_2)=d^{-1}(V)F'(L_1)F'(L_2)$. Furthermore, $$S'(V_{k_1}^{*}, V_i)=\xi^{4(\nu_2+s_2)(\nu_2+i_2)+2\left( (\nu_2+s_2)(\nu_1 +i_1)+(\nu_1 + s_1)(\nu_2+i_2)\right)}\frac{1}{\ell d(V_i)},$$
it implies that $$F'\left(
	\begin{array}{ccc}	
      \epsh{fig35}{10ex}
 		\put(-62,-12){\ms{\Omega_{\overline{\mu}}}}
 		\put(-35,2){\ms{V_i}}
 		\put(-20,-30){\ms{V_i}}
 	\end{array}
   \right)=\sum_{s_1, s_2 =0}^{\ell-1}\dfrac{1}{\ell^2}=1.$$
For the graph of $\overrightarrow{\coev}_{V_i}\circ \tev_{V_i}$, the value $F'$ of its closure is
$$F'\left(
\begin{array}{ccc}	
      \epsh{fig38}{10ex} 
 		\put(-10,-25){\ms{V_i}} 
 	\end{array}
	\right)
= F'\left(
\begin{array}{ccc}	
      \epsh{fig39}{10ex} 
 		\put(-10,-25){\ms{V_i}} 
 	\end{array}
	\right)
= d(V_i).$$
Hence $\lambda=d^{-1}(V_i)$	and it follows that $d(V_i)f_{ii}^{\overline{\mu}}=\overrightarrow{\coev}_{V_i}\circ \tev_{V_i}$.
\end{proof}
We see that the relative modularity parameter $\zeta=\Delta_-\Delta_+=1$. By \cite[Theorem 1.1]{Mar17}, one gets the corollary.
\begin{HQ}
There exists a family of ETQFTs from category $\mathscr{C}^H$.
\end{HQ}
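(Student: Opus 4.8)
The plan is to derive the corollary as an immediate consequence of the general construction of \cite{Mar17}, with Theorem \ref{main theorem} furnishing the required categorical input. The guiding observation is that \cite[Theorem 1.1]{Mar17} takes as its input precisely a relative modular $\mathit{G}$-category and produces from it a family of non-semisimple extended TQFTs. Since Theorem \ref{main theorem} establishes that $\mathscr{C}^H$ is a modular $\mathit{G}$-category relative to $(\mathit{G}, Z)$, the hypotheses of that theorem are met and its conclusion applies verbatim.

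Concretely, I would first confirm that the package of data assembled in Proposition \ref{pre modular} and Theorem \ref{main theorem} --- the $\mathit{G}$-grading $\{\mathscr{C}^H_{\overline{\mu}}\}$, the free realization of $Z$, the bilinear pairing $g^{\bullet t}$, the nonzero modified trace with its associated modified dimension $d$, the semisimplicity away from $\mathcal{X}$, and the relative modularity condition with parameter $\zeta$ --- matches, item by item, the list of axioms required in \cite{Mar17}. All of these have already been verified in the cited statements, so this step is purely a matter of matching notation. I would then feed this data directly into the universal construction of \cite{Mar17} to obtain the associated family of ETQFT functors, the family being indexed by the choices intrinsic to that construction.

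The point worth stressing --- and the feature that distinguishes the $\mathfrak{sl}(2|1)$ case from the generic one --- is the behaviour of the anomaly. In \cite{Mar17} the ETQFT a priori carries a projective anomaly controlled by the scalars $\Delta_\pm$, equivalently by the relative modularity parameter $\zeta = \Delta_+\Delta_-$. Here Proposition \ref{pre modular} gives $\Delta_+ = \Delta_- = 1$ and the computation in the proof of Theorem \ref{main theorem} yields $\zeta = 1$, so the resulting family of ETQFTs is anomaly-free. I anticipate no genuine obstacle: all the substantive work lies in the establishment of relative modularity already carried out above, and the corollary reduces to a direct appeal to \cite[Theorem 1.1]{Mar17}.
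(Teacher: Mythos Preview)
Your proposal is correct and matches the paper's own argument: the corollary is obtained by a direct appeal to \cite[Theorem 1.1]{Mar17}, with Theorem \ref{main theorem} supplying the relative modular $\mathit{G}$-category input and $\zeta = \Delta_{-}\Delta_{+} = 1$ ensuring the anomaly-free feature. Your write-up is more detailed, but the substance is identical.
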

\begin{rmq}
The anomaly free specificity of this theory implies that the TQFT does not depend on the framing of cobordisms nor on the Lagrangian on surfaces (see \cite{Mar17}). It produces linear (and not only projective as in usual TQFT from quantum groups) representations of the mapping class group.
\end{rmq}
\bibliographystyle{plain} 
\bibliography{TKhao}

\end{document}